\documentclass[11]{amsart}
\usepackage{mathrsfs}
\usepackage{amssymb,amsfonts}
\usepackage{amssymb, graphicx}
\usepackage{graphicx}
\numberwithin{equation}{section}
\usepackage{topcapt}
\usepackage[all]{xy}
\usepackage{color}
\usepackage{amsfonts}
\usepackage{amsmath}
\usepackage{latexsym}
\usepackage{amscd}
\usepackage{verbatim}
\usepackage{enumerate}
\usepackage{faktor}

\addtolength{\textwidth}{3cm} \addtolength{\oddsidemargin}{-1.5cm}
\addtolength{\evensidemargin}{-1.5cm}
\allowdisplaybreaks[1]

\newtheorem{thm}{Theorem}[section]

\newtheorem{lem}{Lemma}[section]

\newtheorem{rem}[thm]{Remark}

\title{Cohomology of the flag variety under PBW degenerations}

\author{Martina Lanini}
\email{lanini@mat.uniroma2.it}

\author{Elisabetta Strickland}
\email{strickla@mat.uniroma2.it}

\address{Dipartimento di Matematica, Universit\`a di Roma ``Tor Vergata'',  Via della Ricerca Scientifica 1, I-00133 Rome, Italy}

\begin{document}
\begin{abstract}
PBW degenerations are a particularly nice family of proper flat degenerations of type {\tt A} flag varieties. We show that the cohomology of any PBW degeneration of the flag variety surjects onto the cohomology of the original flag variety, and that this holds in an equivariant setting too. We also prove that the same is true in the symplectic setting when considering Feigin's linear degeneration of the symplectic flag variety.
\end{abstract}

\maketitle

\section{Introduction}
Degenerate flag varieties were introduced by Feigin in \cite{Fei} by Lie theoretic methods, and extensively investigated afterwards, from several viewpoints. In type ${\tt A}$ they admit a linear algebraic description, which inspired a series of papers \cite{CFR12}, \cite{CFR13}, by Cerulli Irelli, Feigin and Reineke, where they produce a realisation of these degenerations  in terms of quiver Grassmannians which was exploited, for example,  to produce a cellularization or to study the singular locus. In \cite{CL} and \cite{CLL} it was shown that in types ${\tt A}$ and ${\tt C}$, Feigin's degenerations of flag varieties are isomorphic to Schubert varieties in an appropriate partial flag variety, hence explaining many of their good properties which had been already noticed, such as, for instance,  their normality and Cohen-Macaulyness.

Recently, more general linear degenerations of type ${\tt A}$ flag varieties have been studied in \cite{CFFFR}. In particular, a large class of proper flat degenerations of flag varieties, called \textbf{PBW degenerations}, turn out to be isomorphic to Schubert varieties.

In the present paper, we show that all PBW degenerations of the flag variety $\mathcal{F}l_n$ have a further good property: their cohomology (with $\mathbb{Z}$-coefficients) surjects onto the cohomology of $\mathcal{F}l_n$. Moreover, both varieties are equipped with an action of an $(n-1)$-dimensional complex algebraic torus, and the same surjectivity result holds for the equivariant cohomology groups with integer coefficients.

The way to compare the cohomology of a complex algebraic variety $X$ and that of a proper flat degeneration $Y$ of $X$ goes as follows. One  considers a proper flat family $\pi:\widetilde{X}\to\mathbb{C}$
with  $X=\pi^{-1}(1)$, $Y=\pi^{-1}(0)$. Then  one knows (see \cite {C} or  \cite {P}) that since   $\widetilde{X}$ is proper and flat, it  contracts to $Y$ and so there is a map
$$g:H^*(Y,\mathbb{Z})\cong H^*(\widetilde{X},\mathbb{Z})\rightarrow H^*(X,\mathbb{Z}),$$
induced by the inclusion of $X$ in $\widetilde{X}$. 

We want to conclude this introduction by recalling that in general the surjectivity of $g$ is not to be expected, and the fact that it holds for \textbf{PBW degenerations} shows once more that these degenerations are extremely well-behaved.
 A particularly nice and easy example of this failure is the toric degeneration of the Grassmannian of 2-planes in $\mathbb{C}^4$ (identified under the Pl\"ucker embedding with the Klein quadric inside $\mathbb{P}^5$) given by
$$\widetilde{X}_t =\{[Z_{12}, Z_{13}, Z_{14}, Z_{23}, Z_{24}, Z_{34}] \in\mathbb{P}^5 \mid Z_{12}Z_{34} - Z_{13}Z_{24} + tZ_{14}Z_{23} =0\},$$
in this case the homomorphism $g$ is neither injective nor surjective (cf. \cite[Proposition 5.1(3)]{IX}). This small example should not induce the reader to believe that surjectivity fails always for toric degenerations: by the main result of this paper --or by direct computation-- one can see that surjectivity holds for Feigin's linear degeneration of $\mathcal{F}l_3$, which in this case is toric (and coincides with the Gelfan'd-Tsetlin degeneration of $\mathcal{F}l_3$).

\section{Flag varieties, Schubert varieties and their cohomology}

 In this section we collect some classical results about type $\tt A$ flag varieties and their Schubert varieties (see, for example, \cite{Fu}).

Let $\underline{d}=(d_1<d_2<\ldots<d_r)$ be a sequence of strictly increasing positive integers and let $n> r$. Let $V$ be an $n$-dimensional complex vector space. We denote by $\mathcal{F}l_{\underline{d},n}$ the variety of (partial) flags:
$$\mathcal{F}l_{\underline{d},n}=\{U_{d_1}\subset U_{d_2}\subset \ldots \subset U_{d_{r}}\mid U_k\in Gr(k,V)\}.$$

 If $d=(1,2,\ldots, n-1)$, we write $\mathcal{F}l_n$ instead of $\mathcal{F}l_{\underline{d},n}$.

The action of $SL_n$ on $V$ induces a transitive action  on $\mathcal{F}l_{\underline{d},n}$. Fix an ordered basis $(e_1, e_2, \ldots, e_n)$ of $V$ and for any $i=1, \ldots, n$ denote by $E_i=\textrm{span}_\mathbb{C}\{e_1, \ldots, e_i\}$. Then $\mathcal{F}l_{\underline{d},n}$ is the $SL_n$-orbit of the flag $E_\bullet=(E_{d_1}\subseteq E_{d_2}\subseteq\ldots\subseteq E_{d_r})$. Given a permutation $w\in \mathfrak{S}_n$, we write $E^w_\bullet$ for the coordinate flag whose $i$-th space is
$$E^w_{d_i}=\textrm{span}_{\mathbb{C}}\{e_{w(1)}, e_{w(2)}, \ldots, e_{w(d_i)}\}.$$
Let $W_{\underline{d}}$ be the stabiliser of $(d_1, \ldots, d_r)$ in  $\mathfrak{S}_n$ and denote by $\mathfrak{S}_n^{\underline{d}}$ the set of minimal length coset representatives in $\mathfrak{S}_n/W_{\underline{d}}$. Let $B\subset SL_n$ be the Borel subgroup of upper triangular matrices.  Then $\mathcal{F}l_{\underline{d},n}$ is a CW-complex with cells  $BE^w_\bullet$ for $w\in \mathfrak{S}_n^{\underline{d}}$, all of even real dimension. Schubert varieties are the closures $X_w:=\overline{BE^w_\bullet}$, and their  fundamental homology classes constitute a $\mathbb{Z}$-basis of $H_*(\mathcal{F}l_{\underline{d}, n},\mathbb{Z})$. A $\mathbb{Z}$-basis for $H^*(\mathcal{F}l_{\underline{d},n})$ is obtained taking duals of these classes, the Schubert classes. 

Denote by $\leq$ the Bruhat order on $\mathfrak{S}_{n}$. Any Schubert variety inherits a structure of CW-complex from $\mathcal{F}l_{\underline{d},n}$, as it is a disjoint union of $B$-orbits:
$$X_w=\bigsqcup_{\substack{y\leq w\\ y\in\mathfrak{S}^m_{\underline{d}}}} BE^y_\bullet.$$
Therefore also the integral cohomology of $X_w$ is determined by the integral homology of it. This will play an important role in the proof of our main result, where instead of dealing with 2-cocycles, we will be allowed to  work with 2-cycles.

Recall that in the case of the variety of complete flags, $H_*(\mathcal{F}l_{n},\mathbb{Z})$  is a ring generated in degree 2 by the classes of the Schubert varieties $X_{s_i}$, where $s_i$ is the simple transposition which exchanges $i$ and $i+1$:
\begin{equation}\label{DefinSi}
X_{s_i}=\left\{
E_1\subset E_2 \subset \ldots  \subset E_{i-1}\subset U\subset E_{i+1}\subset \ldots \subset E_{n-1}\mid U\in Gr(i,V)\
\right\}.
\end{equation}
The fact that these varieties (of complex dimension 1) also lie in the special fibre of the PBW degenerations, which  are going to be introduced in the next  section, will be crucial for us.

In general, given a complex semisimple algebraic group $G$ with Borel $B$, the corresponding generalised flag variety $G/P$ (where $P\supseteq B$ is a parabolic subgroup) is also a CW-complex, whose cells have even real dimension. In particular $H_2(G/B,\mathbb Z)$ is free with  basis the homology classes of the Schubert varieties $X_{s_i}$, indexed by simple reflections of the Weyl group $W$ of $G$.  Moreover, $H^*(G/B, \mathbb{Z})$ is generated, as a ring, in degree 2, see \cite{Bo}.

Finally, we want to mention an alternative presentation of the cohomology of the flag variety $\mathcal{F}l_n$, due to Borel \cite{Bo}, in terms of invariant rings. 
The symmetric group $\mathfrak{S}_n$ naturally acts on the polynomial ring $S:=\mathbb{Z}[x_1,x_2, \ldots, x_n]$ by permuting the variables, and we have
\begin{equation}\label{Eqn:CohFlagsBorel}H^*(\mathcal{F}l_{\underline{d},n}, \mathbb{Z})\cong\left(({S}/{(S^{\mathfrak{S_n}}_+)})\right)^{W_{\underline{d}}},
\end{equation}
where $(S^{\mathfrak{S_n}}_+)$ denotes the ideal generated by the  $\mathfrak{S}_n$-homogeneous invariants of positive degree. As for a Schubert variety $X_w$, the inclusion $X_w\hookrightarrow \mathcal{F}l_{\underline{d},n}$ induces the presentation
\begin{equation}\label{Eqn:CohSchubertsBorel}H^*(X_w, \mathbb{Z})\cong\left(\faktor{({S}/{(S^{\mathfrak{S_n}}_+)})}{I_w}\right)^{W_{\underline{d}}},
\end{equation}
where $I_w$ is the $\mathbb{Z}$-span of the Schubert classes $[X_u]^*$, for $u\not\leq w$, $u\in\mathfrak{S}_n^{\underline{d}}$.

\section{PBW-degenerations of flag varieties}
We recall here the definition of PBW degenerations of the flag variety from \cite{CFFFR}.

Let $V$ be a complex, $n$-dimensional vector space, and let $\{e_1, \ldots, e_n\}$ be an ordered basis of $V$.  

For any $t\in \mathbb{C}$, and $i=1, \ldots n$, we define the map
$$\textrm{pr}_{i,t}(e_j)=\begin{cases} e_j\ \ \ \text{if}\ \ j\neq i\\ te_i  \ \ \ \text{if}\ \ j=i\end{cases}.$$ 

 Moreover,  we set $pr_{0,t}:=\textrm{Id}_V$.

Let $\underline{j}=(j_1, \ldots, j_r)$ be such that $1\leq j_1<\ldots< j_r\leq n-2$, for some $r\geq 1$, and define  
$$b_k:=\begin{cases}
i, \ \ \ \text{if } k=j_i,\\
0\ \ \ \text{otherwise}. 
\end{cases}
\qquad (k=1, \ldots, n-1)
$$
We consider the variety
$$\widetilde{\mathcal {F}l}^{\underline{j}}_{n}=\{(V_1,V_2,\ldots, V_{n-1},t)| V_i\in Gr(i,V);\ \textrm{pr}_{b_i,t}(V_i)\subset V_{i+1}; \ t\in \mathbb{C}{}\}.
$$

Note that there is an obvious projection:
\begin{equation}\label{EqnDegeneration}\pi:\widetilde{\mathcal {F}l}^{\underline{j}}_{n}\to \mathbb{C},\end{equation}
given by $\pi((V_1,V_2,\ldots, V_{n-1},t))=t$,
such that $\pi^{-1}(1)$ is isomorphic to the variety $\mathcal{F}l_n$ of complete flags in $\mathbb{C}^n$.


Following \cite{CFFFR}, we denote by $\mathcal{F}l^{\underline{j}}_{n}$ the fibre over $0$, and call it a PBW degeneration.

Given our $r$-tuple $\underline{j}=(j_1, \ldots, j_r)$, we define
$$
\ell_1=1, \qquad \ell_i=\#\{z\mid 1\leq z\leq r \hbox{ and }j_z<i\}+i, \quad(i=2, \ldots, n-1).
$$

The following result generalises \cite[Theorem 1.2]{CL} and tells that every PBW degeneration of a flag variety can be realised as a Schubert variety inside an appropriate partial flag variety.
\begin{thm}[{\cite[Theorem 6]{CFFFR}}]\label{ThmDegSchubert}
We have an isomorphism of projective varieties $\mathcal{F}l^{\underline{j}}_n\stackrel{\sim}{\rightarrow} X_{w_{\underline{j}}}$, where $X_{w_{\underline{j}}}$ is a Schubert variety inside $\mathcal{F}l_{\underline{\ell},n+r}$.
\end{thm}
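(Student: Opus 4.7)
The plan is to construct an explicit closed embedding $\varphi\colon \mathcal{F}l^{\underline{j}}_n \hookrightarrow \mathcal{F}l_{\underline{\ell},n+r}$ and then identify its image with a concrete Schubert variety $X_{w_{\underline{j}}}$. I would work in an enlarged vector space $\mathbb{C}^{n+r}$, adjoining $r$ auxiliary basis vectors $f_1,\ldots,f_r$, one for each index $j_z$ appearing in $\underline{j}$. The jumps of the sequence $\underline{\ell}$ are equal to $1$ at steps $i-1\neq j_z$ and equal to $2$ at steps $i-1=j_z$, which is precisely the combinatorial record of where the PBW degeneration replaces the containment $V_i\subset V_{i+1}$ by the weaker projection condition $\textrm{pr}_{z,0}(V_{j_z})\subset V_{j_z+1}$: each such weakening costs one extra degree of freedom, accommodated by the extra basis vector.

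To define $\varphi$ on a point $(V_1,\ldots,V_{n-1})\in \mathcal{F}l^{\underline{j}}_n$, I would produce an $\underline{\ell}$-flag
$$F_{\ell_1}\subset F_{\ell_2}\subset\ldots\subset F_{\ell_{n-1}}\subset \mathbb{C}^{n+r}$$
that simultaneously records the $V_i$ (viewed inside $\mathbb{C}^{n+r}$ via the obvious inclusion) and the projection data. Roughly, between $F_{\ell_{j_z}}$ and $F_{\ell_{j_z+1}}$ one inserts a two-dimensional piece whose auxiliary direction $f_z$ encodes the $e_z$-component of $V_{j_z}$ that is not already in $V_{j_z+1}$. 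The recipe must be chosen so that (i) each $V_i$ can be recovered from $F_{\ell_i}$ by intersection with the span of $(e_1,\ldots,e_n)$, and (ii) the compatibility conditions between the $V_i$ translate to honest inclusions between the $F_{\ell_i}$. Injectivity follows from the reconstruction formula in (i), and together with properness of the source this forces $\varphi$ to be a closed immersion.

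To identify the image with $X_{w_{\underline{j}}}$, I would compute $\varphi$ on the torus-fixed locus of $\mathcal{F}l^{\underline{j}}_n$ and read off the "largest" coordinate flag $E^{w_{\underline{j}}}_\bullet$ landing in the image; this produces the candidate $w_{\underline{j}}\in\mathfrak{S}_{n+r}^{\underline{\ell}}$. A suitably chosen copy of the Borel subgroup $B\subset SL_{n+r}$ acts on $\mathbb{C}^{n+r}$ in a way compatible with $\varphi$ (one extends the $SL_n$-action to act naturally on the auxiliary directions), so $\varphi(\mathcal{F}l^{\underline{j}}_n)$ is a closed, irreducible, $B$-stable subvariety containing the open cell $BE^{w_{\underline{j}}}_\bullet$; hence it is contained in $X_{w_{\underline{j}}}$. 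A dimension count -- $\dim \mathcal{F}l^{\underline{j}}_n$ being known from \cite{Fei} and matching $\ell(w_{\underline{j}})$ -- closes the argument.

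The main obstacle I expect is twofold. First, one has to find the precise embedding formula that assembles each $V_i$ with the auxiliary vectors into an honest flag of the prescribed type; the delicate point is the two-dimensional jump at each $j_z$, where the projection condition must be rephrased as a clean inclusion in $\mathbb{C}^{n+r}$. Second, one has to produce an explicit combinatorial description of the permutation $w_{\underline{j}}$ in terms of $\underline{j}$ (generalising the formula of \cite{CL} for Feigin's original PBW degeneration), both to identify the target Schubert variety and to carry out the dimension match. Once these combinatorial and linear-algebraic data are aligned, $B$-equivariance is essentially formal.
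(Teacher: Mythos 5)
Theorem~\ref{ThmDegSchubert} is not proved in this paper: it is quoted from \cite[Theorem~6]{CFFFR}, where the argument is carried out in the language of quiver Grassmannians. What the paper supplies is only a quiver-free description of the isomorphism $\zeta$ of Equation~\eqref{EqnIsoDegSchubert}: one sends $(V_1,\ldots,V_{n-1})$ to the tuple of preimages $\pi_i^{-1}(V_i)$ under the explicit surjections $\pi_i$, which ``fold'' the auxiliary coordinates $\widetilde{e}_{n+1},\ldots,\widetilde{e}_{n+\ell_i-i}$ back onto $e_1,\ldots,e_{\ell_i-i}$ and kill $\widetilde{e}_1,\ldots,\widetilde{e}_{\ell_i-i-1}$. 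Your outline — enlarge to $\mathbb{C}^{n+r}$, build an embedding, identify the image with a Schubert variety by $B$-stability and a dimension count — is a reasonable reconstruction of the overall strategy, but your proposed reconstruction rule does not match the actual map: $\zeta(V_i)\cap\mathrm{span}(\widetilde{e}_1,\ldots,\widetilde{e}_n)$ always contains $\mathrm{span}(\widetilde{e}_1,\ldots,\widetilde{e}_{\ell_i-i-1})$ and is generically not a copy of $V_i$; one recovers $V_i$ by applying $\pi_i$, not by intersecting.

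The more substantive concern is that the steps you defer or call ``essentially formal'' are exactly where the content lies. The variety $\mathcal{F}l^{\underline{j}}_n$ carries no $SL_n$-action (the degeneration destroys it), so it is far from automatic that a Borel subgroup of the larger $SL_{n+r}$ preserves the image of $\zeta$; establishing this $B$-stability, together with irreducibility of the special fibre, is the main achievement of \cite{CFFFR} and is what makes the identification with a Schubert variety possible. Similarly, pinning down the permutation $w_{\underline{j}}\in\mathfrak{S}_{n+r}^{\underline{\ell}}$ and matching $\ell(w_{\underline{j}})$ with $\dim\mathcal{F}l_n=\binom{n}{2}$ is genuine combinatorial work (and the citation for the dimension should be flatness of the family from \cite{CFFFR}, not \cite{Fei}, which treats only the full degeneration). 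Your sketch gives the right shape of the argument, but the ``remaining obstacles'' you list are not side issues to be ironed out: they are the proof.
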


The above fact will be a central ingredient in the proof of our main theorem.

We recall here the explicit isomorphism, since it will be needed later. 
In the proof of \cite[Theorem 6]{CFFFR}, the isomorphism is given by using the formalism of quiver Grassmannians, we will reformulate it here so that such a formalism will not be necessary.

We denote by $\{\widetilde{e_1}, \ldots, \widetilde{e}_{n+r}\}$ the standard basis of $\mathbb{C}^{n+r}$ and consider the maps $\pi_i:\textrm{span}_\mathbb{C}\{\widetilde{e_1}, \widetilde{e_2}, \ldots, \widetilde{e}_{n+\ell_i-i}\}\rightarrow \mathbb{C}^{n}$ given by
$$
\pi_i(\widetilde{e_k})=
\begin{cases}
0, \ \ \ \text{if}\ 1\leq k\leq \ell_i-i-1, \\
e_k, \ \ \ \text{if}\ \ell_i-i\leq k\leq n,\\
e_{k-n}, \ \ \  \text{if}\ n+1\leq k.
\end{cases}
$$
Then the isomorphism of the theorem is given by 
\begin{equation}\label{EqnIsoDegSchubert}
\zeta:\mathcal{F}l_n^{\underline{j}}\rightarrow \mathcal{F}l_{\underline{l},n+r}, \qquad 
(V_1, \ldots, V_{n-1})\mapsto( \pi_1^{-1}(V_1), \ldots, \pi_{n-1}^{-1}(V_{n-1})).
\end{equation}

For $i=1, \ldots, n-1$, recall the Schubert variety
\begin{equation*}
X_{s_i}=\left\{
E_1\subset E_2 \subset \ldots  \subset E_{i-1}\subset U\subset E_{i+1}\subset \ldots \subset E_{n-1}\mid U\in Gr(i,V)\
\right\}.
\end{equation*}
Denote 
$$\stackrel{\circ}{X_{s_i}}:=X_{s_i}\setminus \{E_\bullet, E_\bullet^{(i, i+1)}\}.$$

\begin{lem}\label{LemmaLinIndp}
 \begin{enumerate}
\item The variety $X_{s_i}\times\{t\}$ is contained in $\pi^{-1}(t)$ for any $t\in \mathbb{C}$.
 \item The torus $T^0\subset SL_{n+r}$ of complex  diagonal matrices acts on $\zeta(\stackrel{\circ}{X_{s_i}})$ via the character 
$$\textrm{diag}(\mu_1,\mu_2, \ldots,\mu_{n+r+1})\mapsto \mu_i\mu^{-1}_{i+1}.$$
\end{enumerate}
\end{lem}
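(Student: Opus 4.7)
For part (1), my plan is to check the PBW conditions directly: any flag in $X_{s_i}$ has the form $(E_1, \ldots, E_{i-1}, U, E_{i+1}, \ldots, E_{n-1})$ with $E_{i-1} \subset U \subset E_{i+1}$, and the key observation is that for any $b$ and $t$ the map $\mathrm{pr}_{b,t}$ preserves every standard coordinate subspace $E_j$ (it either acts as the identity or merely rescales $e_b$ by $t$). Each required inclusion $\mathrm{pr}_{b_k, t}(V_k) \subset V_{k+1}$ therefore reduces to the basic inclusion $V_k \subset V_{k+1}$: for $k \notin \{i-1, i\}$ both sides are coordinate subspaces; for $k = i-1$ the image lies in $E_{i-1} \subset U$; and for $k = i$ one uses $\mathrm{pr}_{b_i, t}(U) \subset \mathrm{pr}_{b_i, t}(E_{i+1}) \subset E_{i+1}$.

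For part (2), I shall parametrize $\stackrel{\circ}{X_{s_i}}$ by $\mathbb{G}_m$ via $z \mapsto V_\bullet(z)$ with $V_i(z) = U(z) := E_{i-1} + \mathbb{C}(z e_i + e_{i+1})$ and $V_k(z) = E_k$ for $k \neq i$; the two excluded points $E_\bullet$ and $E^{(i,i+1)}_\bullet$ correspond to $z = \infty$ and $z = 0$, respectively. I will then compute $\zeta(V_\bullet(z))$ explicitly using \eqref{EqnIsoDegSchubert} and read off the $T^0$-action on the parameter $z$.

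The crucial observation is that, since $j_1 < \ldots < j_r$ are distinct positive integers,
\[
\ell_i - i = \#\{\nu : 1 \leq \nu \leq r,\ j_\nu < i\} \leq i - 1,
\]
so both indices $i$ and $i+1$ fall into the middle range $[\ell_i - i, n]$ in the definition of $\pi_i$; hence $\pi_i(\widetilde{e_i}) = e_i$ and $\pi_i(\widetilde{e}_{i+1}) = e_{i+1}$, whence
\[
\pi_i^{-1}(U(z)) = \mathrm{span}\{\widetilde{e_1}, \ldots, \widetilde{e}_{i-1},\ z\widetilde{e_i} + \widetilde{e}_{i+1},\ \widetilde{e}_{n+1}, \ldots, \widetilde{e}_{n+\ell_i - i}\}.
\]
A parallel and simpler computation shows that for $k \neq i$ the subspace $\pi_k^{-1}(E_k)$ is a coordinate subspace of $\mathbb{C}^{n+r}$, hence automatically $T^0$-invariant. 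Applying $t = \mathrm{diag}(\mu_1, \ldots, \mu_{n+r}) \in T^0$ rescales the distinguished spanning vector to $z\mu_i \widetilde{e_i} + \mu_{i+1}\widetilde{e}_{i+1}$, which is proportional to $(z\mu_i\mu_{i+1}^{-1}) \widetilde{e_i} + \widetilde{e}_{i+1}$; thus $t \cdot \zeta(V_\bullet(z)) = \zeta(V_\bullet(z\mu_i\mu_{i+1}^{-1}))$, yielding the claimed character. The only obstacle I anticipate is the potential case distinction in the piecewise definition of $\pi_i$, but the inequality $\ell_i - i \leq i - 1$ collapses every case except the one above.
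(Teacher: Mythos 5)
Your proof is correct and follows essentially the same approach as the paper's: for part (1) checking the PBW conditions via the fact that each $\mathrm{pr}_{b,t}$ preserves coordinate subspaces, and for part (2) computing $\zeta$ explicitly on $\stackrel{\circ}{X_{s_i}}$ using the inequality $\ell_i - i \le i-1$ so that $\pi_i$ sends $\widetilde{e}_i, \widetilde{e}_{i+1}$ to $e_i, e_{i+1}$. Your treatment of the case $k=i$ in part (1) via $U \subset E_{i+1}$ is slightly slicker than the paper's explicit parametrization of $U$, but the substance is the same.
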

\begin{proof}
\begin{enumerate}
\item
Clearly, $\textrm{pr}_{b_k,t}(E_k)\subseteq E_k$ for any $k=1, \ldots n-2$ and $t\in \mathbb{C}$, so that $\textrm{pr}_{b_k,t}(E_k)\subset E_{k+1}$ for all $k=1, \ldots, n-2$ and for all $t\in\mathbb{C}$,  and $\textrm{pr}_{b_{i-1},t}(E_{i-1})\subset U$. Moreover, since there exists $[a_1 : a_2]\in \mathbb{P}^1$ such that $U=E_{i-1}\oplus\mathbb{C}(a_1 e_i+a_2 e_{i+1})$, we also have $\textrm{pr}_{b_i,t}(U)\subseteq E_{i+1}$. 
\item For $j\leq k$, we denote by $\widetilde{E}_{[j,k]}$ the $\mathbb{C}$-span of the vectors $\{\widetilde{e}_j, \widetilde{e}_{j+1}, \ldots, \widetilde{e}_{k-1}, \widetilde{e}_{k}\}$, and we abbreviate $\widetilde{E}_k:=\widetilde{E}_{[1,k]}$. Moreover, we use the convention that $\widetilde{E}_{[j,k]}=\{0\}$ if $k<j$.
We have
$$\zeta(E_k)=\pi_k^{-1}(E_k)=\widetilde{E}_k\oplus\widetilde{E}_{[n+1, n+\ell_k-k]},$$
so that 
\begin{align*}\zeta(X_{s_i})&=\left\{
\zeta(E_1)\subset \ldots  \subset \zeta(E_{i-1})\subset
 \zeta(E_{i-1})\oplus \mathbb{C}(a_1\zeta(e_i)+a_2\zeta(e_{i+1}))\subset \zeta(E_{i+1})\subset \ldots \subset \zeta(E_{n-1})\mid [a_1:a_2]\in\mathbb{P}^1\
\right\}\\
&=\left\{
\zeta(E_1)\subset \ldots  \subset \zeta(E_{i-1})\subset
 \zeta(E_{i-1})\oplus \mathbb{C}(a_1\widetilde{e}_i+a_2\widetilde{e}_{i+1})\subset \zeta(E_{i+1})\subset \ldots \subset \zeta(E_{n-1})\mid [a_1:a_2]\in\mathbb{P}^1\
\right\},
\end{align*}
where the second equality follows from the fact that $i>\ell_i-i$, so that $\pi_i^{-1}(e_k)=\widetilde{e}_{k}$ for all $k>i$. At this point it is clear that $T^0$ acts on
$$
\zeta(\stackrel{\circ}{X_{s_i}})=\left\{
\zeta(E_1)\subset \ldots  \subset \zeta(E_{i-1})\subset
 \zeta(E_{i-1})\oplus \mathbb{C}(a_1\widetilde{e}_i+a_2\widetilde{e}_{i+1})\subset \zeta(E_{i+1})\subset \ldots \subset \zeta(E_{n-1})\mid a_1,a_2\in\mathbb{C}^\times\
\right\},
$$
via the character $\textrm{diag}(\mu_1,\mu_2,\ldots,\mu_{n+r})\mapsto \mu_i\mu_{i+1}^{-1}$.
\end{enumerate}
\end{proof}

\section{Main result}

Let us start by remarking that there is an isomorphism
$$\widetilde{\mathcal {F}l}^{\underline{j}}_{n}\setminus \mathcal{F}l^{\underline{j}}_{n}\rightarrow \mathcal {F}l_{n}\times \mathbb{C}^\times, \qquad 
(V_1,V_2,\ldots ,V_{n-1},t)\mapsto(V_1,\textrm{pr}_{b_1,t}^{-1}(V_2),\ldots,\textrm{pr}_{b_1,t}^{-1}\cdots \textrm{pr}_{b_{n-2},t}^{-1}(V_{n-1}),t).$$

Moreover, by \cite{CFFFR}, the degeneration \eqref{EqnDegeneration} is proper and  flat so that  $\widetilde{\mathcal {F}l}^{\underline{j}}_{n}$ is the closure of $\pi^{-1}(\mathbb C^\times)$.   This implies (see, for example, \cite{C} or \cite{P}) that  we get a contraction of $\widetilde{\mathcal {F}l}^{\underline{j}}_{n}$ onto $\mathcal {F}l^{\underline{j}}_{n}$. In particular,  $H^*(\mathcal {F}l_n^{\underline{j}},\mathbb Z)\cong H^*(\widetilde{\mathcal{F}l}_n,\mathbb Z)$.

Since $\pi^{-1}(1)=\mathcal{F}l_n\hookrightarrow \widetilde{\mathcal {F}l}^{\underline{j}}_{n}$, we obtain a homomorphism of graded rings
$$g: H^*(\mathcal {F}l_n^{\underline{j}},\mathbb Z)\to H^*(\mathcal {F}l_n,\mathbb Z).$$

Our main result is that the above homomorphism is surjective.

\begin{thm}The homomorphism $g$ is surjective.
\end{thm}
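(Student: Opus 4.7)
The plan is to reduce to surjectivity in degree~$2$ via Borel's generation theorem, then verify it by pairing the relevant homology classes against Chern classes of tautological Pl\"ucker line bundles on the ambient partial flag variety.

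First, since $H^*(\mathcal{F}l_n,\mathbb{Z})$ is generated as a ring in degree~$2$ (by Borel), it suffices to show $g$ is surjective in degree~$2$. Both $H^2$-groups are free abelian (even-dimensional Schubert cells), so by Kronecker duality this is equivalent to the induced map $f_*\colon H_2(\mathcal{F}l_n,\mathbb{Z})\to H_2(\mathcal{F}l_n^{\underline{j}},\mathbb{Z})$ being split injective; concretely, the images of the basis $\{[X_{s_i}]\}_{i=1}^{n-1}$ must form a subset of a $\mathbb{Z}$-basis of the target.

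Second, by Lemma \ref{LemmaLinIndp}(1) the curve $X_{s_i}$ lies inside every fibre of $\pi$, so the deformation retraction of $\widetilde{\mathcal{F}l}^{\underline{j}}_n$ onto $\mathcal{F}l^{\underline{j}}_n$ preserves it. Thus $f_*[X_{s_i}]=[X_{s_i}]\in H_2(\mathcal{F}l^{\underline{j}}_n,\mathbb{Z})$, which under the isomorphism $\zeta$ of Theorem~\ref{ThmDegSchubert} becomes $[\zeta(X_{s_i})]\in H_2(X_{w_{\underline{j}}},\mathbb{Z})$.

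Third, to pin down this class, I pair it with $c_1(\mathcal{L}_k)$ where $\mathcal{L}_k:=\det(\mathcal{V}_{\ell_k})^\vee$ and $\mathcal{V}_{\ell_k}$ is the $k$-th tautological subbundle on $\mathcal{F}l_{\underline{\ell},n+r}$. The classes $c_1(\mathcal{L}_k)$, $k=1,\dots,n-1$, form a $\mathbb{Z}$-basis of $H^2(\mathcal{F}l_{\underline{\ell},n+r},\mathbb{Z})$ dual to the Schubert-curve basis $\{[X_{s_{\ell_m}}]\}$ of $H_2$. Along $\zeta(X_{s_i})\cong\mathbb{P}^1$ only the $i$-th flag component of the original $X_{s_i}$ varies, so under $\zeta$ only the $\ell_i$-th component $\pi_i^{-1}(V_i)$ moves. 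Consequently $\mathcal{V}_{\ell_k}|_{\zeta(X_{s_i})}$ is a trivial bundle for $k\neq i$, while for $k=i$ it sits in a short exact sequence
\[
0\to\ker\pi_i\to\mathcal{V}_{\ell_i}|_{\zeta(X_{s_i})}\to V_i|_{X_{s_i}}\to 0
\]
with $\ker\pi_i$ trivial of rank $\ell_i-i$ and $V_i|_{X_{s_i}}\cong\mathcal{O}^{i-1}\oplus\mathcal{O}(-1)$; multiplicativity of $\det$ gives $\det\mathcal{V}_{\ell_i}|_{\zeta(X_{s_i})}\cong\mathcal{O}(-1)$. Therefore $\langle c_1(\mathcal{L}_k),[\zeta(X_{s_i})]\rangle=\delta_{ki}$, so dually $[\zeta(X_{s_i})]=[X_{s_{\ell_i}}]$ in $H_2(\mathcal{F}l_{\underline{\ell},n+r},\mathbb{Z})$. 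Since the closed embedding $X_{w_{\underline{j}}}\hookrightarrow\mathcal{F}l_{\underline{\ell},n+r}$ induces an injection on $H_2$ (the Schubert cells of $X_{w_{\underline{j}}}$ are a subset of those of the ambient), the same identity holds in $H_2(X_{w_{\underline{j}}},\mathbb{Z})$, where $\{[X_{s_{\ell_i}}]\}_{i=1}^{n-1}$ is precisely the canonical basis. Hence $f_*$ is even a basis-to-basis bijection, and $g$ is surjective in degree~$2$, hence on all of $H^*$.

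The principal technical step is the Chern-class computation, which demands carefully tracking how the formula for $\zeta$ transforms $\mathcal{V}_{\ell_i}$ along the $\mathbb{P}^1$ factor $X_{s_i}$; the key simplification is that $X_{s_i}$ moves only a single flag component, making the restricted tautological bundle split cleanly as above. An alternative route, which is presumably what Lemma \ref{LemmaLinIndp}(2) is preparing for, would use $T^0$-equivariant localization and the tangent weight of $\zeta(X_{s_i})$ at the common fixed point $\zeta(E_\bullet)$ to read off the class directly; this is the natural approach for the equivariant extension advertised in the abstract.
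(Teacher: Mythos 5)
Your proof is correct and reaches the same conclusion as the paper, but by a genuinely different route at the decisive step. Both arguments reduce to degree~$2$ via Borel's theorem, pass to homology using torsion-freeness (which you justify by the even-cell decomposition, the paper by Theorem~\ref{ThmDegSchubert}), and use Lemma~\ref{LemmaLinIndp}(1) together with the homotopy across fibres to identify $f_*[X_{s_i}]$ with the class of $X_{s_i}$ inside the special fibre. The divergence is in showing that these $n-1$ classes are linearly independent and part of a $\mathbb{Z}$-basis of $H_2(\mathcal{F}l^{\underline{j}}_n,\mathbb{Z})$: the paper invokes Lemma~\ref{LemmaLinIndp}(2) and concludes from the pairwise distinct $T^0$-characters $\mu_i\mu_{i+1}^{-1}$ on the open cells $\zeta(\stackrel{\circ}{X_{s_i}})$ (the equivariant-localization reasoning your closing paragraph correctly anticipates), whereas you compute the degrees of the tautological Pl\"ucker line bundles $\mathcal{L}_k=\det(\mathcal{V}_{\ell_k})^\vee$ along $\zeta(X_{s_i})$ and obtain the sharper identity $[\zeta(X_{s_i})]=[X_{s_{\ell_i}}]$ in $H_2(\mathcal{F}l_{\underline{\ell},n+r},\mathbb{Z})$. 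Your route buys an explicit identification of the image classes --- and, as a byproduct, shows $H_2(\mathcal{F}l^{\underline{j}}_n,\mathbb{Z})$ has rank exactly $n-1$, so $g^2$ is an isomorphism --- at the cost of the short-exact-sequence bookkeeping; the paper's torus-character argument is leaner and is exactly what is reused for the equivariant theorem. One small caution in your write-up: the claim that $\{[X_{s_{\ell_i}}]\}_{i=1}^{n-1}$ is ``precisely the canonical basis'' of $H_2(X_{w_{\underline{j}}},\mathbb{Z})$ is a \emph{consequence} of your degree computation, not a fact you may assume beforehand; a priori only those $s_{\ell_m}$ with $s_{\ell_m}\leq w_{\underline{j}}$ contribute Schubert curves, and it is your pairing computation that forces all of them to lie below $w_{\underline{j}}$. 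With that phrased carefully, the argument is complete.
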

\begin{proof} Since
$H^*(\mathcal {F}l_n,\mathbb Z)$ is generated by $H^2(\mathcal {F}l_n,\mathbb Z),$ (see \cite{Bo} or \cite{Fu} pp.131-153), it is enough to show that
$$g^2: H^2(\mathcal {F}l_n^{\underline{j}},\mathbb Z)\to H^2(\mathcal {F}l_n,\mathbb Z)$$
is surjective. By Theorem \ref{ThmDegSchubert},  $H^*(\mathcal{F}l^{\underline{j}}_n, \mathbb{Z})\cong H^*(X_{w_{\underline{j}}}, \mathbb{Z})$ as graded $\mathbb{Z}$-modules,  and we deduce that $H^*(\mathcal{F}l^{\underline{j}}_n, \mathbb{Z})$ is torsion free. Hence, it is sufficient to prove that the dual homomorphism in homology 
$$g_2:H_2(\mathcal{F}l_{n},\mathbb Z)\to H_2(\mathcal{F}l^{\underline{j}}_n,\mathbb Z)$$
is injective and its image is a split direct summand. 

To prove this, we will use the known fact that $H_2(\mathcal{F}l_{n},\mathbb Z)$ is spanned  by Schubert cycles
$$
X_{s_i}:=\left\{
E_1\subset E_2 \subset \ldots  \subset E_{i-1}\subset U\subset E_{i+1}\subset \ldots \subset E_{n-1}\mid U\in Gr(i,V)\
\right\}.
$$
Recall that a basis for $H_2(\mathcal {F}l_n,\mathbb Z)$ is given by the cycles  $c_i$, $i=1,\ldots , n-1$, of the varieties $X_{s_i}$ defined above.

By Lemma \ref{LemmaLinIndp}(1), $X_{s_i}$ is contained in $\pi^{-1}(t)$ for any $t\in\mathbb{C}$. Such a containment induces a map $X_{s_i}\times [0,1]\rightarrow\widetilde{\mathcal{F}l}^{\underline{j}}_n$, which gives a homotopy in $\widetilde{\mathcal F}l^{\underline{j}}_n$ between  $X_{s_i}\times\{1\}$ and $X_{s_i}\times\{0\}$. In particular the cycles $X_{}i\times\{1\}$ and $X_{s_i}\times\{0\}$ are homologous in $\widetilde{\mathcal F}l^{\underline{j}}_n$ and  we deduce that the class of $X_{s_i}$ in $H_2(\mathcal{F}l_n^{\underline{j}},\mathbb Z)$ is the image of the class of $X_{s_i}$ in $H_2(\mathcal {F}l_n,\mathbb{Z})$. 
For what we have noticed, these are cycles of 1-dimensional (complex) subvarieties of $\mathcal{F}l_n^{\underline{j}}$ and by Lemma \ref{LemmaLinIndp}(2) they are linearly independent and part of a basis. Hence the claim follows.
\end{proof}

\begin{rem}It is natural to ask whether the above surjectivity result can  be extended  to all flat and irreducible degenerations of \cite{CFFFR}. In \cite{CFFFR}, a Bialynicki-Birula decomposition of the special fibre, say $Y$, is provided, so that also in that case one could check surjectivity by looking at the induced map between the 2-homology groups. As in the proof of our main result, it is possible to determine the image of the fundamental homology class of $X_{s_i}$ inside  $H_2(Y, \mathbb{Z})$. However, to show linear independence, the identification of the special fibre  with a Schubert varietiety in a partial flag variety of bigger rank was necessary, and we do not see at the moment an alternative argument. Such an identification is missing in the more general case, which we leave to future work.
\end{rem}

\subsection{The equivariant case}
Let us denote by $T\subset SL_n(\mathbb{C})$ the algebraic torus consisting of diagonal matrices with respect to the basis $\{e_1,\ldots, e_n\}$.

The torus $T$ acts on $\mathbb{C}^n$ by rescaling the coordinates: if $\underline{\lambda}=\textrm{diag}(\lambda_1,\ldots, \lambda_n)\in T$ and $v=\sum a_j e_j\in \mathbb{C}^n$, then $\underline{\lambda}\cdot v=\sum \lambda_j a_j e_j$.
Now, $v\in V_i$ if and only if $\underline{\lambda}\cdot v\in \underline{\lambda}V_i$, so if this holds and $\textrm{pr}_{b_i,t}V_i\subset V_{i+1}$, then
 $$pr_{b_i,t}(\underline{\lambda} v)=\sum_{j\neq b_i}\lambda_ja_je_j=\underline{\lambda} pr_{b_i,t}(v)\in \underline{\lambda}V_{i+1}.$$
Therefore, if
$(V_1,V_2,\ldots, V_{n-1},t)\in  \widetilde{\mathcal {F}l}_{n}$, then $(\underline{\lambda} V_1,\underline{\lambda} V_2,\ldots \underline{\lambda} V_{n-1},t)\in  \widetilde{\mathcal {F}l}_{n}$. The torus action preserves any fibre of the map $\pi$ and, hence, we have a homomorphism
$$H_T^*(\mathcal{F}l^{\underline{j}}_n)\rightarrow H^*_T(\mathcal{F}l_n).$$

\begin{thm}The homomorphism $H_T^*(\mathcal{F}l^{\underline{j}}_n)\rightarrow H^*_T(\mathcal{F}l_n)$ is surjective.
\end{thm}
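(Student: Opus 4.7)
The plan is to deduce the equivariant surjectivity from the non-equivariant one via a graded Nakayama argument, using that $\mathcal{F}l_n$ is equivariantly formal. First, as discussed in the paragraphs preceding the statement, $T$ acts on the total space $\widetilde{\mathcal{F}l}^{\underline{j}}_{n}$ compatibly with $\pi$ and with trivial action on $\mathbb{C}$, so the inclusion $\mathcal{F}l_n\hookrightarrow \widetilde{\mathcal{F}l}^{\underline{j}}_{n}$ is $T$-equivariant, and the retraction of the total space onto $\mathcal{F}l^{\underline{j}}_n=\pi^{-1}(0)$ provided by \cite{C} or \cite{P} can be chosen to be $T$-equivariant, since both the family and the special fibre are $T$-stable. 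This produces $g_T$ as an $H^*_T(\mathrm{pt},\mathbb{Z})$-algebra homomorphism, and by functoriality its reduction modulo the irrelevant ideal $I:=H^{>0}_T(\mathrm{pt},\mathbb{Z})$ recovers the non-equivariant map $g$ of the previous theorem.

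Second, I would invoke the equivariant formality of $\mathcal{F}l_n$: since $H^*(\mathcal{F}l_n,\mathbb{Z})$ is torsion-free and concentrated in even degrees, the Serre spectral sequence of the Borel fibration $\mathcal{F}l_n\to (\mathcal{F}l_n)_T\to BT$ has $E_2$-page concentrated in even total degree, forcing all differentials to vanish. Therefore $H^*_T(\mathcal{F}l_n,\mathbb{Z})\cong H^*(\mathcal{F}l_n,\mathbb{Z})\otimes_{\mathbb{Z}} H^*_T(\mathrm{pt},\mathbb{Z})$ as $H^*_T(\mathrm{pt},\mathbb{Z})$-modules, and in particular it is a finitely generated free module over $H^*_T(\mathrm{pt},\mathbb{Z})$ whose quotient modulo $I$ is precisely $H^*(\mathcal{F}l_n,\mathbb{Z})$.

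Combining the two ingredients, the previous theorem gives $\mathrm{image}(g_T)+I\cdot H^*_T(\mathcal{F}l_n,\mathbb{Z})=H^*_T(\mathcal{F}l_n,\mathbb{Z})$: indeed, for any homogeneous $\alpha$ on the right one can find $\beta\in H^*_T(\mathcal{F}l^{\underline{j}}_n,\mathbb{Z})$ so that $\alpha-g_T(\beta)$ vanishes modulo $I$, by non-equivariant surjectivity. Since $\mathrm{image}(g_T)$ is a graded $H^*_T(\mathrm{pt},\mathbb{Z})$-submodule of a finitely generated, non-negatively graded $H^*_T(\mathrm{pt},\mathbb{Z})$-module, the graded Nakayama lemma forces $\mathrm{image}(g_T)=H^*_T(\mathcal{F}l_n,\mathbb{Z})$. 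The step I expect to require the most care is verifying the $T$-equivariance of the contraction, but this should follow from running the arguments of \cite{C} and \cite{P} in the equivariant category, exploiting that a $T$-equivariant proper flat family over $\mathbb{A}^1$ with trivial $T$-action on the base admits an equivariant deformation retraction onto its special fibre.
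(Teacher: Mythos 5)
Your proof is correct, but it takes a genuinely different route from the paper. The paper's argument simply mirrors its non-equivariant proof: since the Schubert curves $X_{s_i}$ are $T$-stable, they define equivariant $2$-cycles in both $\mathcal{F}l_n$ and $\mathcal{F}l^{\underline{j}}_n$, and Lemma~\ref{LemmaLinIndp}(2) shows these remain linearly independent in the target; combined with the fact that $H^*_T(\mathcal{F}l_n)$ is generated in degree $2$ as an $H^*_T(\mathrm{pt})$-algebra, this gives surjectivity directly. You instead bootstrap from the non-equivariant theorem using equivariant formality of $\mathcal{F}l_n$ and graded Nakayama, which never revisits the geometry of the degeneration or the explicit cycles. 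Your argument is more robust and modular: it would apply to any $T$-equivariant proper flat family whose generic fibre is equivariantly formal with finitely generated cohomology, given the non-equivariant statement. The paper's proof, by contrast, stays closer to the concrete data already developed and reuses Lemma~\ref{LemmaLinIndp} rather than introducing spectral-sequence machinery. Both are valid; the paper's is shorter given what it has in hand, yours would generalize more readily.

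One small remark on the step you flag as delicate: you do not actually need to equivariantize the contraction of \cite{C}, \cite{P}. It suffices to observe that the $T$-equivariant inclusion $\mathcal{F}l^{\underline{j}}_n\hookrightarrow\widetilde{\mathcal{F}l}^{\underline{j}}_n$ induces an isomorphism on ordinary cohomology (via the non-equivariant contraction), and then compare the Serre spectral sequences of the associated Borel fibrations over $BT$: the induced map is an isomorphism on $E_2$-pages, hence on the abutments. This gives $H^*_T(\mathcal{F}l^{\underline{j}}_n)\cong H^*_T(\widetilde{\mathcal{F}l}^{\underline{j}}_n)$ without constructing an equivariant retraction. (The paper is itself terse on this point, so this is a refinement rather than a gap.) Your graded Nakayama step is correct as stated; note that only boundedness below of the grading is really used, with finite generation ensuring the freeness claim in the Leray--Hirsch part but being inessential for the Nakayama induction itself.
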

\begin{proof}
The statement follows once noticed that the cycles $X_{s_i}$ are stabilised by the torus $T$ and hence they define equivariant cycles both in $\mathcal{F}l_n$ and in $\mathcal{F}l^{\underline{j}}_n$, which (by Lemma \ref{LemmaLinIndp}(2)) are in both linearly independent.
\end{proof}

\begin{rem}The homomorphism $g$ is not injective if $n\geq 2$. The total dimension of the cohomology of the Schubert variety $X_{w_{\underline{j}}}$ coincides with the number of 
its $T^0$-fixed points ($T^0$ being the maximal torus of diagonal matrices in $SL_{n+r}(\mathbb{C})$ as in Lemma \ref{LemmaLinIndp}), that are the coordinate
(partial) flags $\tilde{E}^y_\bullet$ in $\mathcal{F}l_{\underline{\ell}, n+r}$, for $y\leq w_{\underline{j}}$, $y\in\mathfrak{S}_{n+r}^{\underline{\ell}}$. Now we notice that any coordinate flag in $\mathcal{F}l_n$ is also contained in $\mathcal{F}l_n^{\underline{j}}$ and
that its image under $\zeta$ is a coordinate flag in $\mathcal{F}l_{\underline{\ell}, n+r}$, hence a $T^0$-fixed point. If $n>2$, then the cardinality of the set of $T^0$-fixed points 
is strictly greater than the number of coordinate flags in $\mathbb{C}^n$, which is the total dimension of the cohomology of $\mathcal{F}l_n$.
\end{rem}

\begin{rem}By the previous remark, we know that $\ker(g)\neq\{0\}$ and it would be very interesting to give an explicit description of it. Given that both the cohomology of the flag variety $H^*(\mathcal{F}l_n, \mathbb{Z})$ and the cohomology of the Schubert variety $H^*(X_{w_{\underline{j}}}, \mathbb{Z})$ admit a nice presentation (cf. Equations  \eqref{Eqn:CohFlagsBorel},  \eqref{Eqn:CohSchubertsBorel}) involving Schubert classes, one might hope to be able to describe the kernel in terms of Schubert classes. Unluckily, it does not seem to be feasible, since the embedding $\zeta$ from \eqref{EqnIsoDegSchubert} does not map in general a Schubert variety of $\mathcal{F}l_n$ to a Schubert variety inside $\mathcal{F}l_{\underline{j},n+r}$. A first example of this phenomenon can be already observed in the  the case $n=3$, $\underline{j}=\{1\}$.
\end{rem}

\section{The symplectic case}
We extend here our result to the case of Feigin's degenerations of symplectic flag varieties.

 Let $W$ be a $2n$-dimensional complex vector space. We keep the same notation as in the previous section and denote by $(\widetilde{e}_1, \widetilde{e}_2, \ldots, \widetilde{e}_{2n})$ an ordered basis for $W$. Moreover,
 we equip
$W$ with the symplectic form given by the following matrix:
$$
\left(
\begin{array}{cc}
 0&J\\
 -J&0
\end{array}
\right),
$$
where $J$ denotes the $n\times n$-antidiagonal matrix with entries $(1,1,\ldots,1)$. Given a subspace $U\subseteq W$, we denote by
$U^\perp$ its orthogonal space in $W$ with respect to the above symplectic form. 
 Let $\underline{d}=(1\leq d_1<d_2<...<d_r\leq 2n-1)$ be such that $d_{r-i+1}=2n-d_i$ for any $i$.
 Then one can define an involution 
$$\iota:\mathcal{F}l_{\underline{d},2n}\rightarrow \mathcal{F}l_{\underline{d},2n}\qquad (W_{d_i})\mapsto (W_{d_i}')$$
with $W_{d_i}'=W_{2n-d_i}^\perp$. The symplectic flag variety 
$Sp\mathcal{F}l_{\underline{d},2n}$ can hence be realised as the subvariety of flags in $\mathcal{F}_{\underline{d},2n}$ which are fixed by $\iota$.

On the other hand, also Feigin's degeneration of the symplectic flag variety can be obtained by taking fixed points of an involutive automorphism of the  type ${\tt A}$ degeneration, as proven 
in \cite{FFL}. 

Let $V$ be a $2n$-dimensional complex vector space, with basis $\{e_1,\cdots, e_{2n}\}$. As in \cite[\S 4.41]{CL}, we equip the vector space $V$ with a non-degenerate skew-symmetric bilinear 
form $b_V[\cdot, \cdot]$ such that
\begin{equation}\label{Eq:DefBilFormV}
e_{k}^\ast= \left\{\begin{array}{rcl}e_{2n-1-k} &\textrm{if} & 1\leq k\leq 2n-2,\\ e_{2n}&\textrm{if} & k=2n-1.\end{array}\right.\end{equation}
Again, for a subspace $Z\subseteq V$, we write $Z^\perp$ for its orthogonal space in $V$ with respect to the form $b_V[\cdot, \cdot]$.
 
Thus, one can consider inside
$\widetilde{\mathcal{F}l}^{(1,2,\ldots, 2n-2)}_{2n}$ the subvariety of isotropic elements, that is 
$$
Sp\widetilde{\mathcal{F}l}^{(1,2,\ldots, 2n-2)}_{2n}:=\{(V_1,V_2,\ldots, V_{2n-1},t)| V_i\in Gr(i,V);\ \textrm{pr}_{b_i,t}(V_i)\subset V_{i+1}; \ V_{2n-i}=V_{i}^\perp; \ t\in\mathbb{C}\}.
$$
Again, we consider the projection $\pi:Sp\widetilde{\mathcal{F}l}^{(1,2,\ldots,2n-2)}_{2n}\rightarrow \mathbb{C}$ given by $(V_1, \ldots, V_{2n-1},t)\mapsto t$. The fibre over $t\neq 0$ is isomorphic to the symplectic flag variety
$$Sp\mathcal{F}l_{2n}\cong\{(V_1\subset V_2\subset \ldots \subset V_{2n-1})| V_i\in Gr(i,V); \ V_{2n-i}=V_{i}^\perp\}$$
and we denote by $Sp\mathcal{F}l^a_{2n}$ the fibre over $0$, following the notation in \cite{FFL, CL}.

Any fibre of the homomorphism $\pi:\widetilde{\mathcal{F}l}^{(1,2,\ldots, 2n-2)}_{2n}\rightarrow \mathbb{C}$
is hence equipped with an involutive automorphism:
$$\iota^t:\pi^{-1}(t)\rightarrow \pi^{-1}(t), \qquad (V_1, V_2, \ldots, V_{2n-1}, t)\mapsto (V_{2n-1}^\perp, V_{2n-2}^\perp, \ldots, V_{1}^\perp,t).$$ 

Let $T^0\subset SL_{4n-2}$ be the maximal torus of diagonal matrices. In \cite[\S 4.1]{CL} it is proven that the following diagram of $T^0$-varieties commutes:
\begin{equation}\label{Eq:CommDiagMain}
\xymatrix{
\mathcal{F}l_{2n}^a\ar^{\iota^0}[r]\ar_\zeta[d]&\mathcal{F}l_{2n}^a\ar^\zeta[d]\\
X_{w_{(1,2,\ldots, 2n-2)}}\ar^\iota[r]&X_{w_{(1,2,\ldots, 2n-2)}}
}
\end{equation}
for an appropriate Schubert variety $X_{w_{(1,2,\ldots, 2n-2)}}$ inside $\mathcal{F}l_{(1,3,\ldots, 4n-5,4n-3), 4n-2}$

It follows that also in the symplectic case Feigin's degeneration can be realised as a Schubert variety (\cite[Theorem 4.1]{CL}), since any Schubert variety inside $Sp\mathcal{F}l_{(1,3,\ldots, 4n-5,4n-3), 4n-2}$ 
is obtained as fixed point set  of the involution $\iota$ restricted to a Schubert variety in $\mathcal{F}l_{(1,3,\ldots, 4n-5, 4n-3),4n-2}$ (cf.\cite[Proposition 6.1.1.2]{LR}).

\subsection{Symplectic version of the main result}
By \cite[Proposition 4.10]{FFL}, the map $\pi:Sp\widetilde{\mathcal{F}l}^{(1,2,\ldots, 2n-2)}_{2n}\rightarrow \mathbb{C}$ is proper and flat and we get once again a
homomorphism $g:H^\ast(Sp\mathcal{F}l_{2n}^a)\rightarrow H^\ast(Sp\mathcal{F}l_{2n})$.

\begin{thm}The homomorphism $g:H^\ast(Sp\mathcal{F}l_{2n}^a)\rightarrow H^\ast(Sp\mathcal{F}l_{2n})$ is surjective.
\end{thm}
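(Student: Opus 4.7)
The plan mirrors the type A argument. Since $Sp\mathcal{F}l_{2n}=Sp_{2n}/B$ is a generalised flag variety, by Borel \cite{Bo} the integral cohomology $H^*(Sp\mathcal{F}l_{2n},\mathbb{Z})$ is generated in degree $2$ by the Schubert classes attached to simple reflections of the Weyl group $W$ of type $C_n$. Moreover, by \cite[Theorem 4.1]{CL}, $Sp\mathcal{F}l_{2n}^a$ is isomorphic to a Schubert variety inside a symplectic partial flag variety; since Schubert varieties admit a CW-decomposition into even-dimensional cells, $H^*(Sp\mathcal{F}l_{2n}^a,\mathbb{Z})$ is torsion-free. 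It therefore suffices to show that the dual homomorphism
$$g_2: H_2(Sp\mathcal{F}l_{2n},\mathbb{Z})\to H_2(Sp\mathcal{F}l_{2n}^a,\mathbb{Z})$$
is injective and its image is a split direct summand.

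The $\mathbb{Z}$-module $H_2(Sp\mathcal{F}l_{2n},\mathbb{Z})$ has rank $n$, with basis the classes of the $1$-dimensional Schubert varieties $Y_{s_1},\ldots,Y_{s_n}$ indexed by simple reflections of $W$. For $i<n$, $Y_{s_i}$ is the family of isotropic flags with $V_j=E_j$ for $j\neq i,2n-i$ and $V_i$ varying in the $\mathbb{P}^1$ of subspaces between $E_{i-1}$ and $E_{i+1}$ (with $V_{2n-i}=V_i^\perp$ then determined), while $Y_{s_n}$ is the family with $V_j=E_j$ for $j\neq n$ and $V_n$ varying over the $\mathbb{P}^1$ of maximal isotropic subspaces between $E_{n-1}$ and $E_{n-1}^\perp$. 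I would first verify the symplectic analogue of Lemma \ref{LemmaLinIndp}(1), namely that $Y_s\times\{t\}\subset\pi^{-1}(t)$ for every $t\in\mathbb{C}$: since each $\textrm{pr}_{b_k,t}$ preserves the coordinate spaces $E_j$, and the varying subspaces in each $Y_s$ are spanned by pairs of coordinate vectors (on which $\textrm{pr}$ merely rescales one coordinate), all flag inclusions and isotropy conditions defining the symplectic family are preserved. This yields, exactly as in the type A proof, a homotopy inside $Sp\widetilde{\mathcal{F}l}^{(1,2,\ldots,2n-2)}_{2n}$ between $Y_s\times\{1\}$ and $Y_s\times\{0\}$, so that $g_2([Y_s])$ coincides with the class of $Y_s$ viewed directly in the special fibre.

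It remains to show that the $n$ classes $[Y_{s_1}],\ldots,[Y_{s_n}]$ are linearly independent in $H_2(Sp\mathcal{F}l_{2n}^a,\mathbb{Z})$ and extend to a $\mathbb{Z}$-basis. For this I would exploit the commutative diagram \eqref{Eq:CommDiagMain}: via $\zeta$, each $Y_s\subset Sp\mathcal{F}l_{2n}^a\subset\mathcal{F}l_{2n}^a$ embeds into the Schubert variety $X_{w_{(1,2,\ldots,2n-2)}}$ inside $\mathcal{F}l_{(1,3,\ldots,4n-3),4n-2}$. Applying Lemma \ref{LemmaLinIndp}(2) to each $\zeta(Y_{s_i})$ with $i<n$ shows that its open part is stable under the maximal torus $T^0\subset SL_{4n-2}$ and is acted on through a distinct character of $T^0$, while an analogous direct computation identifies $\zeta(Y_{s_n})$ as another $T^0$-stable $\mathbb{P}^1$ with yet another distinct character. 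Since the $n$ resulting characters are pairwise distinct, the $[Y_{s_k}]$ correspond to distinct elements of the Schubert basis of $H_2(X_{w_{(1,2,\ldots,2n-2)}},\mathbb{Z})$ and hence form part of a $\mathbb{Z}$-basis of $H_2(Sp\mathcal{F}l_{2n}^a,\mathbb{Z})$. The principal obstacle is the treatment of the extra simple reflection $s_n$, which has no direct counterpart in the type A argument: its Schubert cycle lives in the ``middle'' of the isotropic flag, and its image under $\zeta$, together with the corresponding $T^0$-character, must be computed explicitly, rather than deduced by a direct transport of Lemma \ref{LemmaLinIndp}(2).
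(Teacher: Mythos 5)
Your overall strategy matches the paper's: reduce to surjectivity in degree $2$ (by Borel's theorem), use torsion-freeness of $H^*(Sp\mathcal{F}l_{2n}^a,\mathbb{Z})$ coming from the Schubert realisation to dualise and pass to $H_2$, exhibit the $n$ curve classes simultaneously in every fibre of $\pi$ to show they survive to the special fibre, and establish linear independence there through $\zeta$ via the $T^0$-characters as in Lemma \ref{LemmaLinIndp}(2). This is exactly the route the paper takes, including the need to handle the extra simple reflection $s_n$ by the same mechanism.

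There is, however, a concrete error in your description of the curves. The symplectic form relevant to the degeneration picture is $b_V$ from Equation \eqref{Eq:DefBilFormV}, for which $e_k^*=e_{2n-1-k}$ when $1\le k\le 2n-2$; in particular $e_n^*=e_{n-1}$, so $E_n=\mathrm{span}(e_1,\ldots,e_n)$ is \emph{not} isotropic, and $E_{n-1}^\perp\ne E_{n+1}$. Consequently the families $Y_{s_i}$ as you define them (with $V_j=E_j$ for $j\ne i,2n-i$, hence in particular $V_n=E_n$ when $i<n$, and with $V_{n+1}=E_{n+1}$ when $i=n$) are not subvarieties of $Sp\mathcal{F}l_{2n}$ at all. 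This is precisely why the paper replaces the coordinate flag by the flag $F_\bullet$ with $F_j=E_j$ for $j\le n-1$ and $F_n=E_{n-1}\oplus\mathbb{C}e_{2n-1}$, which \emph{is} compatible with $b_V$, and defines the curves $X^{s_i}$ using $F_\bullet$. Once you make this substitution, the rest of your argument goes through and coincides with the paper's proof; as written, though, the cycles you propose to push through the degeneration do not lie in the relevant variety.
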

\begin{proof}First of all, an element invariant under the involution $\iota^t$ is uniquely determined by the first $n$-vector spaces $(V_1, V_2, \ldots V_n)$, so that in this proof we will write $(V_1, V_2, V_n, t)$ for $(V_1, V_2, \ldots V_n, V^\perp_{n-1}, \ldots, V_{1}^\perp,t)$.

For $k=1, \ldots,  2n$, denote by $E_k$ the $\mathbb{C}$-span of the vectors $\{e_1, e_2, \ldots, e_k\}$. We let $F_\bullet=(F_k)$ be the symplectic flag given by $F_k=E_k$ for $k=1, \ldots n-1$, and $F_n=E_{n-1}\oplus\mathbb{C}e_{2n-1}$. Moreover, for $i=1, \ldots, n-1$, we define
$$
F_\bullet^{(i,i+1)}=(F^{(i,i+1)_k}), \qquad 
F^{(i,i+1)}_k=
\begin{cases}
F_k&\textrm{ if }k\neq i,\\
F_{i-1}\oplus\mathbb{C}e_{i+1}&\textrm{ if }k=i.
\end{cases}
$$ 
We also set
$$
F^{(n, n+1)}_\bullet=(F_k^{(n, n+1)}), \qquad 
F^{(n, n+1)}_k=
\begin{cases}
F_k&\textrm{ if }k\neq n,\\
F_{n-1}\oplus\mathbb{C}e_{n+1}&\textrm{ if }k=n.
\end{cases}
$$

Next, for $i=1, \ldots, n$, consider the varieties
$$
X^{s_i}:=\{F_1\subset F_2\subset\ldots\subset F_{i-1}\subset U\subset F_{i+1}\subset \ldots\subset F_{n}\}.
$$

At this point the proof goes exactly as in the type {\tt A} case, with the difference that the 2-cycles to be considered now are the cycles $c_i$ corresponding to the above
$X_{s_i}$.

Also in the symplectic case, the cohomology of $Sp\mathcal{F}l_{2n}$ is generated in degree 2 and hence it is sufficient to prove that the dual map restricted to the degree 2 part
$$g_2^*:H_2(Sp\mathcal{F}l_{2n})\rightarrow H_2(Sp\mathcal{F}l^a_{2n})$$
 is injective.

 The same argument as in the proof of Lemma \ref{LemmaLinIndp}(1) shows that $X_{s_i}$ is contained in every fibre $\pi^{-1}(t)$, so again we deduce that the class of $X_{s_i}$ in $H^*(Sp\mathcal{F}l^a_{2n})$ is the image under $g_2^*$  of the class of $X_{s_i}$ in $H^*(Sp\mathcal{F}l_{2n})$.

Finally, let us denote by $X^\iota_{w_{(1,2,\ldots, 2n-2)}}$ the Schubert variety of $Sp\mathcal{F}l_{(1,3,\ldots, 4n-3), 4n-2}$ which is obtained as the $\iota$-fixed points of the Schubert variety $X_{w_{(1,2,\ldots, 2n-2)}}$ of $\widetilde{\mathcal{F}l}_{(1,2,\ldots, 4n-3), 4n-2}$. The same argument as for Lemma \ref{LemmaLinIndp}(2) shows that the classes of $\zeta(X_{s_i})$ in $H^*(X^\iota_{w_{(1,2,\ldots, 2n-2)}})$ are linearly independent, and so must be  the classes of $X_{s_i}$ in $H_2(Sp\mathcal{F}l^a_{2n})$.  
\end{proof}


\begin{thebibliography}{99}
\bibitem[Bo]{Bo} A. Borel, \emph{Sur La Cohomologie des Espace Fibres Principaux et des Espaces Homogenes de Groups de Lie Compacts}, Annals of Mathematics, \textbf{57} (1953), no. 1, 115--207. 



\bibitem[CFFFR]{CFFFR} G. Cerulli Irelli, X. Fang, E. Feigin, G. Fourier, M. Reineke,
 \emph{Linear degenerations of flag varieties},  to appear in Mathematische Zeitschrift.

\bibitem[CFR12]{CFR12}G. Cerulli Irelli, E. Feigin, M. Reineke, \emph{Quiver Grassmannians and degenerate flag varieties}, Algebra and Number Theory. \textbf{6} (2012), no. 1, 165--194. 


\bibitem[CFR13]{CFR13} G. Cerulli Irelli, E. Feigin, M. Reineke, \emph{Degenerate flag varieties: moment graphs and Schr\"oder numbers}, Journal of Algebraic Combinatorics. \textbf{38} (2013), no. 1, 159--189. 


\bibitem[CL]{CL} G. Cerulli Irelli, M. Lanini,
{\it Degenerate flag varieties of type {\tt A} and  {\tt C} are Schubert varieties}, Int. Math. Res. Notices \textbf{15} (2015), 6353--6374.

\bibitem[CLL]{CLL}G. Cerulli Irelli, M. Lanini, P. Littelmann, {\it Degenerate flag varieties and Schubert varieties: A characteristic free approach}, Pac. J. Math. \textbf{284} (2016), no. 2, 283--308. 

\bibitem[C]{C}  C. H.~Clemens, \emph{Degeneration of K\"ahler manifolds,} Duke Math. J. 44 (1977), no. 2, 215-290.

\bibitem[Fei]{Fei} E. Feigin, \emph{Degenerate flag varieties and the median Genocchi numbers}, Math. Res. Lett. \textbf{18}
(2011), no. 6, 1163--1178.

\bibitem[Fei12]{Fei12} E. Feigin, \emph{$\mathbb{G}_a^M$-degenerations of flag varieties}, Selecta Math. (N.S.) \textbf{18} (2012), no. 3, 513--537.


\bibitem[FFL]{FFL}E. Feigin, M. Finkelberg, P. Littelmann, \emph{Symplectic degenerate flag varieties}, Canad. J. Math. \textbf{66} (6), 1250--1286.

\bibitem[Fu]{Fu}W. Fulton, Young Tableaux, London Mathematical Society Student Texts, 35, Cambridge University Press (1997) x+260 pp.

\bibitem[IX]{IX}H. Iritani, J. Xiao, \emph{Extremal transition and quantum cohomology: Examples of toric degeneration}, Kyoto J. Math.
\textbf{56} (2016), 873--905.

\bibitem[LR]{LR}
V. ~Lakshmibai, K.N.~ Raghavan, \emph{Standard monomial theory. Invariant theoretic approach.}, Encyclopaedia of Mathematical Sciences, 137. Invariant Theory and Algebraic Transformation Groups, 8. Springer-Verlag, Berlin, 2008.

\bibitem[P]{P}  U.~Persson, \emph{ On degenerations of algebraic surfaces, } Mem. Amer. Math. Soc. 11 (1977), no. 189.



\end{thebibliography}
\end{document}